\title{On the hypergraph connectivity of skeleta of polytopes}
\author{Daniel Hathcock}
\address{Department of Mathematical Sciences, Carnegie Mellon University, Pittsburgh PA, US}\email{dhathcoc@andrew.cmu.edu}
\author{Josephine Yu}
\address{School of Mathematics, Georgia Institute of Technology, Atlanta GA, US}\email{jyu@math.gatech.edu}
\subjclass[2020]{52B05, 05C40}
\keywords{polytopes, connectivity, skeleta, hypergraphs}
\let\dual\Delta
\begin{document}

\begin{abstract}We show that for every $d$-dimensional polytope, the hypergraph whose nodes are $k$-faces and whose hyperedges are $(k+1)$-faces of the polytope is strongly $(d-k)$-vertex connected, for each $0 \leq k \leq d- 1$.
\end{abstract}

\maketitle

\section{Introduction}
\label{sec:intro}

Balinski proved that the edge graph of any $d$-dimensional polytope is $d$-vertex connected~\cite{balinski1961graph}. That is, removing fewer than $d$ of the vertices leaves the remaining vertices connected via edges. 
A number of natural generalizations of this result have since been
investigated. Sallee found bounds for several different notions of
connectivity of incidence graphs between $r$-faces and $s$-faces of a
polytope \cite{sallee1967incidence}. More recently, Athanasiadis
considered the graphs $\mathcal{G}_k(P)$ for a convex polytope $P$, whose nodes are the $k$-faces of $P$, and with two nodes adjacent if the corresponding $k$-faces are both contained in the same $(k+1)$-face. Vertex connectivity of $\mathcal{G}_k(P)$ is equivalent to one of the connectivity notions on the incidence graphs considered by Sallee. Athanasiadis described exactly the minimum vertex connectivity of $\mathcal{G}_k(P)$ over all $d$-polytopes for every $k$ and $d$ \cite{athanasiadis2009graph}.

Let $P$ be a convex $d$-dimensional polytope.  We denote by $\mathcal{H}_k(P)$ the hypergraph whose nodes are the $k$-faces of polytope $P$, and whose hyperedges correspond naturally to the $(k + 1)$-faces of $P$. We say a hypergraph is {\bf strongly $\alpha$-vertex connected} if removing fewer than $\alpha$ nodes along with all hyperedges incident to each removed node leaves the remaining nodes connected. Using tropical geometry, Maclagan and the second author showed that for every \textit{rational} $d$-polytope, $\mathcal{H}_k(P)$ is strongly $(d - k)$-vertex connected \cite{maclagan2019higher}. Our main result is generalizing this statement to all polytopes: 

\begin{theorem}
\label{thm:mainthm}
For every $d$-polytope $P$, the hypergraph $\mathcal{H}_k(P)$ is strongly $(d-k)$-vertex connected, for each $0 \leq k \leq d- 1$. 
\end{theorem}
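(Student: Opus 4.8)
The plan is to prove the connectivity statement by induction on $k$, with the base case $k=0$ being exactly Balinski's theorem, reducing the global claim to a local statement about the stars of vertices that is then glued together using Balinski's theorem itself. Throughout, I fix a set $S$ of at most $d-k-1$ many $k$-faces to delete, and I introduce the auxiliary graph $\Gamma$ whose vertices are the surviving $k$-faces (those not in $S$) and whose edges join two $k$-faces lying in a common \emph{surviving} $(k+1)$-face, meaning a $(k+1)$-face none of whose $k$-subfaces lies in $S$. Connectivity of $\mathcal{H}_k(P)$ after the deletion is precisely connectivity of $\Gamma$, so it suffices to show $\Gamma$ is connected. The case $k=d-1$ is immediate, since the single hyperedge $P$ joins all facets and nothing is deleted, so I may assume $1\le k\le d-2$.

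Two combinatorial facts drive the gluing. First, since the intersection of two distinct $(k+1)$-faces is a face of dimension at most $k$, any two distinct $k$-faces, and likewise any $k$-face together with a vertex not on it, lie in \emph{at most one} common $(k+1)$-face. Second, the $(k+1)$-faces containing a fixed $k$-face $F$ are the vertices of the quotient polytope $P/F$, which has dimension $d-k-1$ and hence at least $d-k$ vertices, while a fixed edge lies in at least $\binom{d-1}{k-1}\ge d-1$ faces of dimension $k$ via the quotient $P/e$. Combining these, each deleted $k$-face kills at most one of the $\ge d-k$ faces of dimension $k+1$ above $F$, so every surviving $k$-face lies in a surviving $(k+1)$-face; and for $k\ge 2$ every edge of $P$ lies in a surviving $k$-face, since it lies in $\ge d-1>|S|$ faces of dimension $k$. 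The gluing is then straightforward: once I know that for each vertex $v$ the surviving $k$-faces through $v$ lie in a single component of $\Gamma$ (``the star of $v$ is connected''), I glue along edges. For $k\ge 2$ the surviving $k$-face on an edge $vv'$ lies in both stars and merges them; for $k=1$ the links are the surviving edges themselves, and since $G(P)$ is $d$-edge-connected while $|S|\le d-2$, the graph recording these merges is $G(P)-S$, which is connected. As $G(P)$ is connected and every surviving $k$-face contains a vertex, all stars, and hence all of $\Gamma$, lie in one component.

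The star of $v$ is where the induction and the real difficulty reside. The $k$-faces through $v$ are the $(k-1)$-faces of the vertex figure $P/v$, a $(d-1)$-polytope, and the $(k+1)$-faces through $v$ are its $k$-faces; thus the part of $\mathcal{H}_k(P)$ supported on faces through $v$ is exactly $\mathcal{H}_{k-1}(P/v)$, which by the inductive hypothesis is strongly $(d-k)$-connected. The subtlety is that a $(k+1)$-face $G\ni v$ may fail to survive in $P$ for a reason invisible inside $P/v$, namely that some deleted $k$-face $F_0\subseteq G$ does not contain $v$; such an $F_0$ is not a node of $\mathcal{H}_{k-1}(P/v)$, so its deletion acts there as the removal of a \emph{hyperedge}, not of a node. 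Hence the star is connected in $P/v$ only after a \emph{mixed} deletion of some $a$ nodes and $b$ hyperedges with $a+b\le|S|\le d-k-1<d-k$, and the crux of the proof is to show that strong $(d-k)$-connectivity survives such a mixed deletion of fewer than $d-k$ nodes and hyperedges. This is the step I expect to be the main obstacle, because the definition of strong connectivity controls only node deletions. I would establish it using the ``at most one common $(k+1)$-face'' property, which makes the hypergraph locally linear: every node has at least $d-k$ incident hyperedges and each deleted node or hyperedge destroys at most one hyperedge at a given node, so realizing the $b$ hyperedge-deletions as node-deletions yields a connected set to which every over-deleted node, still retaining a surviving incident hyperedge, can be re-attached. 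Carrying out this re-attachment simultaneously, and verifying the low-dimensional base cases, is the technical heart of the argument.
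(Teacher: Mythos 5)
Your overall architecture --- induction on $k$ with Balinski's theorem as the base case, vertex figures $P/v$ identifying the star of $v$ with $\mathcal{H}_{k-1}(P/v)$, and gluing the stars along edges of $P$ --- is sound, and it is genuinely different from the paper's proof (which passes to the polar dual and inducts by slicing with a hyperplane that meets the two given faces, misses one deleted face, and misses all vertices). But the proof as written has a genuine gap, precisely at the step you yourself flag as the technical heart: you need that $\mathcal{H}_{k-1}(P/v)$ stays connected after a \emph{mixed} deletion of $a$ nodes and $b$ hyperedges with $a+b<d-k$, and strong $(d-k)$-vertex connectivity (your inductive hypothesis) does not give this. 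Your proposed re-attachment does not close the gap: after replacing each deleted hyperedge $e$ by a deleted representative node $u_e\in e$, each $u_e$ does retain a surviving incident hyperedge $f$ (your minimum-degree and local-linearity counts are correct), but every node of $f$ other than $u_e$ may itself be a representative. A hyperedge of $\mathcal{H}_{k-1}(P/v)$ has at least $k+1$ nodes, while the number of representatives can be as large as $d-k-1$; so whenever $d\ge 2k+2$, nothing you have said excludes a cluster of representatives all of whose surviving hyperedges stay inside the cluster, forming a separate component. Ruling this out requires a further argument that you have not supplied, so the central step of the induction is unproven.

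The natural repair is to strengthen the statement proved by induction rather than to derive the mixed statement from the node-only one afterwards: prove directly that for every $d$-polytope, deleting any $a$ nodes ($k$-faces) and $b$ hyperedges ($(k+1)$-faces) of $\mathcal{H}_k(P)$ with $a+b\le d-k-1$ leaves it connected. Your two combinatorial facts then make the inductive step self-contained: a deleted $k$-face through $v$ is one deleted node of the star; a deleted $k$-face not through $v$ lies in at most one $(k+1)$-face through $v$ (a $k$-face and a vertex not on it lie in at most one common $(k+1)$-face), hence induces at most one deleted hyperedge of the star; and a deleted $(k+1)$-face induces at most one deleted hyperedge. So each star suffers a mixed deletion of total size at most $a+b\le d-k-1=\bigl((d-1)-(k-1)\bigr)-1$, and the inductive hypothesis for the $(d-1)$-polytope $P/v$ applies verbatim, with no re-attachment needed. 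The gluing along edges is unaffected, since hyperedge deletions never remove nodes, so an edge of $P$ still lies in an undeleted $k$-face; and the base case $k=0$ is Balinski's theorem together with the standard fact that a $c$-vertex-connected graph survives deleting $a$ vertices and $b$ edges when $a+b<c$ (delete the vertices first; edge connectivity is at least vertex connectivity). With this restructuring your argument becomes a complete proof by a route quite different from the paper's hyperplane-section argument; as submitted, however, it is incomplete at its central step.
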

The result is tight.  For simple polytopes, each $k$-face is contained in exactly $d-k$ of the $(k+1)$-faces, so the hypergraph $\mathcal{H}_k(P)$  cannot have higher connectivity.

\section{Proof of the result}
\label{sec:proofs}

We say that a pure $k$-dimensional polyhedral complex is {\bf $c$-connected through codimension one} if after removing fewer than $c$ {\em closed} maximal faces, the remaining maximal faces are connected via paths through faces of dimension $k-1$. That is, for any two remaining maximal faces $F, F'$, there remains a sequence $F = G_1, \ldots, G_\ell = F'$ of maximal faces such that for each $i$, $G_i \cap G_{i + 1}$ is a face of dimension $k-1$ not belonging to a removed face. The {\bf $m$-skeleton} of a polytope $Q$ is the polyhedral complex whose maximal faces are the $m$-dimensional faces of $Q$. Then  Theorem~\ref{thm:mainthm}  can be rephrased as the following equivalent form on the  polar dual $Q = P^{\dual}$.  

\begin{theorem}
\label{thm:dual}
For every $d$-polytope $Q$, the $(d-k-1)$-skeleton is $(d-k)$-connected through codimension one, for each $0 \leq k \leq d- 1$. Equivalently, the $k$-skeleton of $Q$ is $(k + 1)$-connected through codimension one for each $0 \leq k \leq d-1$. 
\end{theorem}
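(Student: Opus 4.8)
The plan is to prove Theorem~\ref{thm:dual} by induction on the dimension $d$, using a generic linear functional together with a vertex-figure reduction in the spirit of Balinski's original argument. Throughout, fix a set $R$ of at most $k$ closed $k$-faces to be removed; the goal is to connect all surviving $k$-faces through codimension one. Since an intersection of faces of $Q$ is again a face, the common intersection $\bigcap_{F \in R} F$ is a face of dimension at most $k \le d-1$, hence a proper face, and so it misses some vertex $v$. Thus $v$ lies in at most $|R| - 1 \le k-1$ of the removed faces. I then choose a generic linear functional $\phi$ whose unique maximum over $Q$ is attained at this apex vertex $v$.

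The local step is a vertex-figure reduction. The $k$-faces of $Q$ containing $v$ correspond bijectively to the $(k-1)$-faces of the vertex figure $Q/v$, and two such $k$-faces share a $(k-1)$-face through $v$ exactly when the corresponding $(k-1)$-faces of $Q/v$ share a $(k-2)$-face; that is, codimension-one adjacency among $k$-faces through $v$ matches codimension-one adjacency in the $(k-1)$-skeleton of $Q/v$. Since $Q/v$ is a $(d-1)$-polytope, induction gives that its $(k-1)$-skeleton is $k$-connected through codimension one, so deleting the at most $k-1 < k$ removed faces through $v$ leaves the surviving $k$-faces through $v$ connected. Call this nonempty connected cluster $T_v$.

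It remains to connect every surviving $k$-face to $T_v$. Here I use a monotone flow toward the apex: for a $k$-face $F$ whose $\phi$-maximal vertex $w$ is not $v$, the vertex figure $Q/w$ carries an induced functional $\psi$ for which $F$ corresponds to a $(k-1)$-face lying in $\{\psi \le 0\}$, while some vertex of $Q/w$ has $\psi > 0$ (an up-edge from $w$, which exists because $w$ is not the global maximum). Connectivity of the $(k-1)$-skeleton of $Q/w$ then yields a codimension-one neighbor $F'$ of $F$, through $w$, whose $\phi$-maximal vertex is strictly higher than $w$. Iterating strictly increases the height of the top vertex, so after finitely many codimension-one steps we reach a $k$-face containing $v$, i.e.\ an element of $T_v$.

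The main obstacle is that, unlike the deleted vertices in Balinski's theorem, the removed $k$-faces of $R$ need not sit ``below'' any level of $\phi$, so the monotone paths produced above are not automatically disjoint from $R$. Overcoming this is the crux: one wants the flow and the vertex-figure connectivity to avoid $R$ simultaneously. I expect to resolve it by carrying the removed set through the induction---strengthening the inductive statement to connectivity through codimension one within an arbitrary collection of surviving faces---and by choosing $\phi$ (via a hyperplane adapted to $R$, as in Balinski) so that the removed faces do not block the upward routes, using the high local connectivity of the vertex-figure skeleta to reroute around any removed face encountered. The remaining points, such as nonemptiness of $T_v$ and the base cases $k=0$ or $d=1$, are routine.
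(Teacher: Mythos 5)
Your setup is correct as far as it goes: the choice of an apex $v$ missing at least one removed face, the identification of codimension-one adjacency among $k$-faces through a vertex with codimension-one adjacency in the $(k-1)$-skeleton of its vertex figure, and the connectedness of the cluster $T_v$ are all sound (a ridge containing $v$ lies only in $k$-faces containing $v$, so removed faces away from $v$ cannot block paths inside $T_v$, and at most $k-1$ removed faces survive into $Q/v$). But the proof has a genuine, and explicitly acknowledged, hole at its crux: the monotone-flow step is never shown to avoid the removed set $R$. You say you ``expect to resolve it'' by strengthening the induction and choosing $\phi$ adapted to $R$, but no mechanism is given, and the difficulty is not cosmetic. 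In Balinski's theorem the removed objects are vertices, so the functional can be chosen to control them (e.g.\ constant on them), and ascending paths automatically avoid them; a removed $k$-face with $k \geq 1$ spans an interval of $\phi$-values, cannot be confined near a level set, and can genuinely obstruct every ascending route out of a given face. The rerouting you invoke also fails on counting grounds: at an intermediate top vertex $w \neq v$, nothing prevents \emph{all} $k$ removed faces from containing $w$ (only $v$ was chosen to avoid one of them), so the flow step takes place in the $(d-1)$-polytope $Q/w$ with $k$ faces of its $(k-1)$-skeleton deleted, while your inductive hypothesis tolerates only $k-1$ deletions. The ``strengthened'' inductive statement that would repair this is exactly what remains unproved.

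For comparison, the paper sidesteps the flow entirely. It inducts on $k$ (quantifying over all polytopes at once), picks a single removed face, and uses Lemma~\ref{lemma:hyperplane} to cut $Q$ with a hyperplane $H$ meeting the two faces $F, G$ to be joined while missing that removed face and every vertex of $Q$. By Lemma~\ref{lemma:hyperplaneIntersect}, the section $Q \cap H$ is a $(d-1)$-polytope whose face poset matches that of the faces of $Q$ meeting $H$, with all dimensions dropped by one; the removed set thus shrinks from $k$ faces to at most $k-1$, the inductive hypothesis applies inside $Q \cap H$, and the resulting ridge path lifts back to $Q$. This one-face-at-a-time reduction via a hyperplane section replaces---and is substantially simpler than---the simultaneous ``ascending flow avoiding $R$'' statement that your outline still owes.
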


We will need some lemmas before proceeding with the proof by induction on dimension.

\begin{lemma}
\label{lemma:hyperplane}
Let $F, G, R$ be three distinct $k$-faces of a $d$-polytope $Q$, for some $1 \leq k \leq d-1$. Then there is a hyperplane intersecting $F$ and $G$ and avoiding $R$. Moreover, the hyperplane can be chosen to avoid all vertices of $Q$. 
\end{lemma}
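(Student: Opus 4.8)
\section*{Proof proposal}

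The plan is to build the hyperplane by first fixing a point in the relative interior of each of $F$ and $G$ and then choosing the normal direction. Pick $p \in \mathrm{relint}(F)$ and $q \in \mathrm{relint}(G)$, and let $L = \mathrm{aff}\{p,q\}$ be the line through them with direction $u = q - p$. Any hyperplane I construct will pass through $p$ and $q$, and will therefore meet both $F$ and $G$; the work is to arrange that it contains $L$ while keeping $R$ strictly to one side and avoiding all vertices of $Q$. Since a hyperplane through $p$ and $q$ necessarily contains all of $L$, the first thing to understand is how $L$ sits relative to $Q$.

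The key geometric step, and the one I expect to be the main obstacle, is to show that $L \cap Q = [p,q]$; that is, the line leaves the polytope exactly at $p$ and at $q$. Because $Q$ is compact and convex, $L \cap Q$ is a segment $[\alpha,\beta] \supseteq [p,q]$, so it suffices to check that $p$ and $q$ are its endpoints. If, say, $p$ were an interior point of this segment, then $p \pm \varepsilon u \in Q$ for small $\varepsilon$; feeding this into a supporting hyperplane $\{c \cdot x = \gamma\}$ that cuts out $F$ forces $c \cdot u = 0$, so that $u$ is a direction of $\mathrm{aff}(F)$ and hence $L \subseteq \mathrm{aff}(F)$. Then $q \in L \cap Q \subseteq \mathrm{aff}(F) \cap Q = F$, contradicting $q \in \mathrm{relint}(G)$ with $G \ne F$. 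Thus $L \cap Q = [p,q]$. Two consequences drop out immediately, with no genericity needed: first, $L \cap R \subseteq L \cap Q = [p,q]$, and a short supporting-hyperplane argument for $R$ shows $[p,q] \cap R = \emptyset$ (equality in the convexity estimate would force both $p$ and $q$ into $R$), so $L \cap R = \emptyset$; second, any vertex of $Q$ lying on $L$ would be an extreme point inside $[p,q]$, hence equal to $p$ or $q$, which is impossible since $k \ge 1$ makes $p,q$ relative-interior points of positive-dimensional faces. So $L$ automatically avoids $R$ and every vertex of $Q$.

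With $L \cap R = \emptyset$ in hand, I would obtain the separating direction by projecting along $L$. Let $\pi$ denote orthogonal projection onto $u^{\perp}$. Since no point of $R$ lies on $L$, we have $\pi(p) \notin \pi(R)$, and $\pi(R)$ is compact and convex, so $\pi(p)$ can be strictly separated from $\pi(R)$ by some $a \in u^{\perp}$, giving $a \cdot y > a \cdot p$ for all $y \in R$. Because $a \perp u$, the hyperplane $H = \{x : a \cdot x = a \cdot p\}$ contains the whole line $L$, hence passes through $p \in F$ and $q \in G$, while $R$ lies strictly on the side $a \cdot x > a \cdot p$; in particular $H \cap R = \emptyset$.

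Finally I would perturb $a$ within the separating set to clear the vertices. The admissible normals form a nonempty open convex cone in $u^{\perp}$, and for each vertex $v$ of $Q$ the bad condition $a \cdot v = a \cdot p$ reads $a \cdot (v - p) = 0$, which defines a proper linear subspace of $u^{\perp}$ precisely because $v \notin L$ guarantees $v - p \notin \mathbb{R}u$. A finite union of proper subspaces cannot cover the open cone, so a generic admissible $a$ also satisfies $a \cdot v \ne a \cdot p$ for every vertex $v$, while the vertices of $R$ are already handled by the strict separation. The resulting $H$ passes through $p$ and $q$, avoids $R$, and avoids all vertices of $Q$, as required.
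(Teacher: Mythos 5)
Your proof is correct and follows essentially the same strategy as the paper's: take the line $L$ through relative interior points of $F$ and $G$, show $L$ misses $R$ and all vertices, project along $L$, strictly separate the projected point from the (compact, convex) image of $R$, and perturb the normal within the open cone of separating directions to avoid the finitely many projected vertices. The only difference is cosmetic: the paper establishes that $L$ meets no face of dimension $\le k$ other than $F$ and $G$ via the smallest face containing $F \cup G$, whereas you prove $L \cap Q = [p,q]$ directly with supporting-hyperplane computations — both yield the same key fact.
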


\begin{proof}
Let $f \in F$ and $g \in G$ be relative interior points, and let $L$ be the line through $f$ and $g$.  Let $Q'$ be the smallest face of $Q$ containing $F \cup G$.
By convexity, $L \cap Q \subset Q'$ and $L$ meets the boundary of $Q'$ only at the two points $f$ and $g$.  In particular $L$ does not meet $R$ or any other face of dimension $\leq k$.

We may assume that $Q$ is a $d$-dimensional polytope in
$\mathbb{R}^d$.  Let $\pi$ be a corank one linear map from
$\mathbb{R}^d$ to $\mathbb{R}^{d-1}$ such that the image of $L$ is a point. Then the image $R' = \pi(R)$ does not contain $\pi(L)$, and each vertex $v_1, \ldots, v_n$ of $Q$ has $v_i' = \pi(v_i) \neq \pi(L)$ since $L$ does not contain any of the vertices.

Since $R'$ is convex and does not contain $\pi(L)$, there is a hyperplane through $\pi(L)$ which does not meet $R'$.   Since $R'$ is compact, the set of normal vectors of such hyperplanes form a full dimensional open set in  $\mathbb{RP}^{d-1}$.  (More precisely, it is the interior of the dual cone, and its negative, of the pointed cone generated by $R'$ after a translation that sends $\pi(L)$ to the origin.)  On the other hand, the condition that such a hyperplane contains each $v_i'$ is a codimension one closed condition. Thus, as there are finitely many $v_i'$, the cone of such normal vectors restricted to those whose hyperplane {\em does not} contain any $v_i'$ is non-empty. In particular, there is a hyperplane $H'$ through  $\pi(L)$ which does not meet $R'$ or any of the $v_i'$.  Its preimage $\pi^{-1}(H)$ is a desired hyperplane.
\end{proof}

\begin{lemma}
\label{lemma:hyperplaneIntersect}
Let $Q$ be a polytope and $H$ a hyperplane intersecting $Q$ but not containing any vertices of $Q$.  
The map $\phi : F \mapsto F \cap H$ is a poset isomorphism
from the poset of faces of $Q$ that meet $H$ to the face poset of $Q \cap H$. 
\end{lemma}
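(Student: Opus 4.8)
The plan is to exploit the decomposition of a polytope into the relative interiors of its faces, together with the elementary convexity fact that for a convex set $C$ and an affine subspace $A$ with $\mathrm{relint}(C) \cap A \neq \emptyset$ one has $\mathrm{relint}(C \cap A) = \mathrm{relint}(C) \cap A$.

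First I would record two preliminary facts. Because $H$ avoids all vertices of $Q$, each vertex lies strictly on one of the two open sides of $H$, and a face $F$ meets $H$ if and only if $F$ has vertices on both sides (if all vertices were strictly on one side, convexity would force $F$ into the corresponding open halfspace). When this happens I claim $\mathrm{relint}(F) \cap H \neq \emptyset$: taking a convex combination of all vertices of $F$ with one coefficient near $1$ produces a relative-interior point close to that vertex, so one obtains points $x_+, x_- \in \mathrm{relint}(F)$ on opposite sides of $H$, and the segment $[x_+, x_-] \subseteq \mathrm{relint}(F)$ crosses $H$. Combined with the convexity fact applied to $C = F$ and $A = H$, this yields the central identity $\mathrm{relint}(F \cap H) = \mathrm{relint}(F) \cap H$ for every face $F$ meeting $H$. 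I would also check that $\phi$ is well-defined, i.e. that $F \cap H$ is a face of $Q \cap H$: writing $F = Q \cap H_F$ for a supporting hyperplane $H_F$ of $Q$, the same $H_F$ supports $Q \cap H$ and cuts out $F \cap H = (Q \cap H) \cap H_F$.

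Next I would assemble the bijection. The faces of $Q$ partition $Q$ into their relative interiors, so intersecting with $H$ and applying the central identity gives $Q \cap H = \bigsqcup_{F \cap H \neq \emptyset} \mathrm{relint}(F \cap H)$, a disjoint union; on the other hand the faces of the polytope $Q \cap H$ partition it the same way. Since each $\mathrm{relint}(F \cap H)$ is the relative interior of an honest face of $Q \cap H$, and distinct faces have disjoint relative interiors, matching the two partitions shows $\phi$ is a bijection. Injectivity follows because $F \cap H = F' \cap H$ forces $\mathrm{relint}(F)$ and $\mathrm{relint}(F')$ to share a point, whence $F = F'$; surjectivity follows because any face $G'$ of $Q \cap H$ has a relative-interior point lying in some $\mathrm{relint}(F \cap H)$, forcing $G' = F \cap H$.

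Finally, for the poset structure, $\phi$ obviously preserves inclusions. For the reverse direction, if $F \cap H \subseteq F' \cap H$ then $\mathrm{relint}(F) \cap H = \mathrm{relint}(F \cap H) \subseteq F'$, so $F'$ contains a point of $\mathrm{relint}(F)$; since the smallest face of $Q$ containing such a point is $F$ itself, this gives $F \subseteq F'$. Hence $\phi$ is an order isomorphism. I expect the only real subtlety to be the central identity $\mathrm{relint}(F \cap H) = \mathrm{relint}(F) \cap H$ — specifically the verification that $H$ actually meets the relative interior of every face it touches — since everything else is bookkeeping with the standard relint partition; once that identity is secured, the bijection and the order-reversal step are routine.
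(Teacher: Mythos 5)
Your proof is correct and takes essentially the same approach as the paper's: both establish that $F \cap H$ is a face of $Q \cap H$ via a supporting hyperplane for $F$, and both derive injectivity and surjectivity by matching relative interiors of faces of $Q$ with relative interiors of faces of $Q \cap H$, using the fact that $H$ meets the relative interior of every face it touches. You are somewhat more explicit than the paper in two places it treats as clear --- the verification that $\mathrm{relint}(F) \cap H \neq \emptyset$ whenever $F$ meets $H$, and the check that $\phi^{-1}$ also preserves inclusions --- but the underlying argument is the same.
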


\begin{proof}
For any face $F$ of $Q$ which meets $H$, since $H$ does not contain any vertices of $F$, $F$ is not contained in $H$ and $H$ meets the relative interior of $F$,  so $\dim(F \cap H) = \dim(F) -1$. 
Moreover, $F \cap H$ is indeed a face of $Q \cap H$:  any supporting hyperplane for $F$ in $Q$ is also a supporting hyperplane for $F \cap H$ in $Q \cap H$.  On the other hand, for any face $F'$ of $Q \cap H$, let $x \in F'$ be a relative interior point in $F'$, and let $F$ be the unique face of $Q$ for which $x$ is a relative interior point.  Then $x$ is also in the relative interior of $F \cap H$.  Since $F'$ and $F\cap H$ are two faces of $Q \cap H$ that meet in their relative interiors, we have $F \cap H = F'$.
So $\phi$ is a surjective map between the desired sets.  
If $F \cap H = G \cap H$ for $k$-faces $F, G$ meeting $H$, then $F$ and $G$ would have a common relative interior point, which implies $F=G$.  Thus $\phi$ is injective.  It is clear that $\phi$ preserves the inclusion relation.
\end{proof}

\begin{proof}[Proof of Theorem \ref{thm:dual}] 

We will use induction on $k$.   The statement is trivial for $k = 0$, as we are not removing any faces, and the vertices of a polytope are connected through the empty face.  The case when $k = 1$ is clear, as removing a single edge does not disconnect the vertex-edge graph of any polytope.

Suppose $2 \leq k \leq d-1$.  Let $Q$ be a $d$-polytope and $\mathcal{B}$ be any set of $k$ $k$-faces of $Q$ to remove. We need to find a path between any two $k$-faces $F, G \not \in \mathcal{B}$, through codimension-one faces, which we will call {\em ridge paths}. 
Arbitrarily choose any $R \in \mathcal{B}$. Lemma \ref{lemma:hyperplane} gives a hyperplane $H$ intersecting $F$ and $G$, and avoiding $R$ and vertices of $Q$. Let $Q' = Q \cap H$. Since $H$ intersects $F$ and $G$, $F' = F \cap H$ and $G' = G \cap H$ are two $(k-1)$-faces of $Q'$ by Lemma~\ref{lemma:hyperplaneIntersect}. Moreover, each face in $\mathcal{B} \setminus \{R\}$ corresponds to at most one $(k-1)$-dimensional face in $Q'$. Call these faces $\mathcal{B}'$.
As $\abs{\mathcal{B}'} \leq k-1$, by induction there is a ridge path in $Q'$ connecting $F'$ to $G'$ and avoiding each face in $\mathcal{B}'$. Using Lemma \ref{lemma:hyperplaneIntersect}, we can lift this path back up to a ridge path connecting $F$ to $G$ in $Q$ avoiding $\mathcal{B}$.
\end{proof}

\noindent{\bf Acknowledgements}\\
We thank Diane Maclagan for discussions and the referee for comments which helped improve the exposition. JY was partially supported by NSF-DMS grant \#1855726.

\bibliographystyle{alpha}
\bibliography{bibliography}

\end{document}